\theoremstyle{definition}
\newtheorem{definition}{Definition}[section]
\theoremstyle{plain}
\newtheorem{theorem}[definition]{Theorem}
\newtheorem*{conjecture}{Conjecture}
\newtheorem*{mtheorem}{Main Theorem}
\newtheorem{lemma}[definition]{Lemma}
\newtheorem{proposition}[definition]{Proposition}
\theoremstyle{remark}
\newtheorem{example}[definition]{Example}
\newtheorem{remark}[definition]{Remark}
\newcommand{\OO}{\mathscr{O}}
\newcommand{\PP}{\mathbb{P}}
\newcommand{\mscr}[1]{\mathscr{#1}}
\newcommand{\Lscr}{L} % changed to L
\newcommand{\codim}{\text{\upshape{codim}}}
\newcommand{\Sym}{\text{\upshape{Sym}}}
\newcommand{\Pic}{\text{\upshape{Pic}}}
\newcommand{\cE}{\mathscr{E}}
\newcommand{\PPE}{{\PP(\cE)}}
\newcommand{\cHom}{\mathscr{H}\!om}
\newcommand{\subjclass}[2][2010]{%
  \let\@oldtitle\@title%
  \gdef\@title{\@oldtitle\footnotetext{#1 \emph{Mathematics subject classification:} #2}}%
}
\newcommand{\keywords}[1]{%
  \let\@@oldtitle\@title%
  \gdef\@title{\@@oldtitle\footnotetext{\emph{Key words and phrases:} #1.}}%
}
\begin{document}

\title{Resolutions of General Canonical Curves on Rational Normal Scrolls}
\author{Christian Bopp and Michael Hoff}
\date{}

\keywords{syzygy modules, relative canonical resolution, balancedness}
\subjclass{13D02, 14Q05, 14H51}

\maketitle

\begin{abstract}
Let $C\subset \PP^{g-1}$ be a general curve of genus $g$ and let $k$ be a positive integer 
such that the Brill-Noether number $\rho(g,k,1)\geq 0$ and $g > k+1$. 
The aim of this short note is to study the relative canonical resolution of $C$ 
on a rational normal scroll swept out by a $g^1_k=|\Lscr|$ with $\Lscr\in W^1_k(C)$ general. 
We show that the bundle of quadrics appearing in the relative canonical resolution is unbalanced 
if and only if
$\rho>0$ and 
$(k-\rho-\frac{7}{2})^2-2k+\frac{23}{4}>0$.
\end{abstract}

%%%%%%%%%%%%%%%%%%%%%%
\section{Introduction}
Let $C \subset \PP^{g-1}$ be a canonical curve of genus $g$ that admits a complete base point free $g^1_k$, 
then the $g^1_k$ sweeps out a rational normal scroll $X$ of dimension $d=k-1$ and degree $f=g-k+1$. 
One can resolve the curve $C\subset \PPE$, 
where $\PPE$ is the $\PP^{d-1}$-bundle associated to the scroll $X$. 
Schreyer showed in \cite{Sch} that this so-called \emph{relative canonical resolution} is of the form 
\vspace{-2mm}
$$\vspace{-2mm}
0 \to \pi^*N_{k-2}(-k) \to \pi^*N_{k-3}(-k+2) \to \dots \to \pi^*N_1(-2) \to \OO_\PPE \to \OO_C \to 0
$$
where $\pi: C\to \PP^1$ is the map induced by the $g^1_k$ and 
$N_i=\bigoplus_{j=1}^{\beta_i}\OO_{\PP^1}(a_j^{(i)})$. 

To determine the splitting type of these $N_i$ is an open problem. 
If $C$ is a general canonical curve with a $g^1_k$ such that the genus $g$ is large compared to $k$, 
it is conjectured that the bundles $N_i$ are balanced, 
which means that $\max|a_j^{(i)}-a_l^{(i)}|\leq 1$. 
This is known to hold for $k\leq 5$ (see e.g. \cite{DP} or \cite{B}).
Gabriel Bujokas and Anand Patel  \cite{BP} gave further evidence 
to the conjecture by showing that all $N_i$ are balanced if $g=n\cdot k +1$ for $n\geq 1$ and the bundle
$N_1$ is balanced if $g\geq (k-1)(k-3)$. 
%They also showed that the bundle $N_1$ is generically not balanced for $g$ small compared to $k$. 
\\
The aim of this short note is to 
provide a range in which the first syzygy bundle $N_1$, hence the relative canonical resolution, 
is unbalanced for a general pair $(C,g^1_k)$ with non-negative Brill-Noether number $\rho(g,k,1)$. 
Our main theorem is the following.

\begin{mtheorem}
 Let $C\subset \PP^{g-1}$ be a general canonical curve and let $k$ be a positive integer such that 
 $\rho:=\rho(g,k,1)\geq 0$ and $g > k+1$. 
 Let $\Lscr\in W^1_k(C)$ be a general point inducing a $g^1_k=|\Lscr|$. 
 Then the bundle $N_1$ in the relative canonical resolution of $C$ is unbalanced if and only if   
 $(k-\rho-\frac{7}{2})^2-2k+\frac{23}{4}>0$ and $\rho > 0$. 
\end{mtheorem}

After introducing the relative canonical resolution, we prove the above theorem in Section \ref{bundleOfQuadrics}. 
The strategy for the proof is to study the birational image $C'$ of $C$ under 
the residual mapping $|\omega_C\otimes \Lscr^{-1}|$. 
Quadratic generators of $C'$ correspond to special generators of $C\subset \PPE$ 
whose existence forces $N_1$ to be unbalanced in the case $\rho>0$. 
Under the generality assumptions on $C$ and $\Lscr$, 
one obtains a sharp bound for which pairs $(k,\rho)$, the curve $C'$ has quadratic generators. 
Finally in section \ref{examples}, we state a more precise conjecture about the splitting type of the bundles in the relative canonical resolution.

Our theorem and conjecture are motivated by experiments using the computer algebra software \emph{Macaulay2} (\cite{M2})
and the package \texttt{RelativeCanonicalResolution.m2} \cite{BH}.

%%%%%%%%%%%%%%%%%%%%%%

%%%%%%%%%%%%%%%%%%%%%%%%%%%%%%%%%%%%%%%%
\section{Relative Canonical Resolutions} \label{relCanRes}

In this section we briefly summarize the connections between pencils on canonical curves and 
rational normal scrolls in order to define the relative canonical resolution.
Furthermore, we give a closed formula for the degrees of the bundles $N_i$ appearing in the relative canonical resolution.
Most of this section follows Schreyer's article \cite{Sch}.
\begin{definition}
Let $e_1 \geq e_2 \geq \dots \geq e_d\geq 0$ be integers, 
$\cE=\OO_{\PP^1}(e_1)\oplus\dots\oplus\OO_{\PP^1}(e_d)$ and let 
$\pi: \PPE\to \PP^1$ be the corresponding $\PP^{d-1}$-bundle.\newline
A \emph{rational normal scroll} $X=S(e_1,\dots,e_d)$ of  type $(e_1,\dots,e_d)$ is the image of 
$$
j:\PP(\cE)\to \PP H^0(\PPE,\OO_\PPE(1))= \PP^r
$$ \vspace{-1mm}
where $r=f+d-1$ with $f=e_1+\dots+e_d\geq 2$.
\end{definition}
In \cite{H} it is shown that
the variety $X$ defined above is a non-degenerate $d$-dimensional variety of minimal degree
$\deg X=f=r-d+1=\codim X+1$.
If  $e_1,\dots,e_d>0$, then $j:\PP(\cE)\to X\subset \PP H^0(\PPE,\OO_\PPE(1))= \PP^r$ is an isomorphism.
Otherwise, it is a resolution of singularities. Since 
$R^i j_* \OO_\PPE=0$, it is convenient to consider $\PPE$ instead of $X$ for cohomological considerations.
\newline
It is furthermore known, that the Picard group $\Pic(\PPE)$ is generated 
by the ruling $R=[\pi^*\OO_{\PP^1}(1)]$ and the hyperplane class
$H=[j^*\OO_{\PP^r}(1)]$ with intersection products
$$
H^d=f, \ \ H^{d-1}\cdot R=1, \ \ R^2=0.
$$

Now let $C \subset \PP^{g-1}$ be a canonically embedded curve of genus $g$ and let further 
$$g^1_k=\{D_\lambda\}_{\lambda \in \PP^1}\subset |D|$$ be a pencil of divisors of degree $k$. 
If we denote by $\overline{D_\lambda}\subset \PP^{g-1}$ the linear span of the divisor, then
$$
X=\bigcup_{\lambda\in \PP^1}\overline{D_\lambda}\subset \PP^{g-1}
$$
 is a $(k-1)$-dimensional rational normal scroll of degree $f=g-k+1$. 
 Conversely if $X$ is a rational normal scroll of degree $f$ containing a canonical curve, 
 then the ruling on $X$ cuts out a pencil of divisors 
 $\{D_\lambda\}\subset |D|$ such that $h^0(C,\omega_C\otimes \OO_C(D)^{-1})=f.$ 

\begin{theorem}[\cite{Sch}, Corollary 4.4]\label{Sch4.4}
Let $C$ be a curve with a base point free $g^1_k$ and 
let $\PPE$ be the projective bundle associated to the scroll $X$, swept out by the $g^1_k$.
\begin{enumerate}
\item [(a)]
$C\subset \PPE$ has a resolution $F_\bullet$ of type 
$$
0 \to \pi^*N_{k-2}(-k) \to \pi^*N_{k-3}(-k+2) \to \dots \to \pi^*N_1(-2) \to \OO_\PPE \to \OO_C \to 0
$$
with $\pi^*N_i=\sum_{j=1}^{\beta_i}\OO_\PPE(a_j^{(i)}R)$ and $\beta_i=\frac{i(k-2-i)}{k-1}\binom{k}{i+1}$. 
\item [(b)] The complex $F_\bullet$ is self dual, i.e., 
$\cHom(F_\bullet, \OO_\PPE(-kH+(f-2)R))\cong F_\bullet$
\end{enumerate}
\end{theorem}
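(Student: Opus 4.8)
The plan is to build the resolution fibrewise over $\PP^1$ and then assemble the pieces, using throughout the pushforward formula $\pi_*\OO_\PPE(aH+bR)=\Sym^a\cE\otimes\OO_{\PP^1}(b)$ for $a\ge 0$ (with the relevant higher direct images vanishing), which identifies sections of line bundles on $\PPE$ with the graded pieces of the relative coordinate algebra $\mathcal S=\bigoplus_{a\ge 0}\Sym^a\cE$, so that $\PPE=\Proj_{\PP^1}\mathcal S$. First I would fix a general ruling fibre $\PP^{d-1}=\PP^{k-2}$ of $\pi$ and record that $C$ meets it in the divisor $D_\lambda$ of the $g^1_k$, i.e.\ in $k$ points. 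Since the $g^1_k$ is base point free and the fibre is spanned, these $k$ points are distinct and lie in linearly general position in $\PP^{k-2}$ for general $\lambda$; hence they impose independent conditions on quadrics and their Artinian reduction has the symmetric $h$-vector $(1,k-2,1)$, so the point set is arithmetically Gorenstein. Its minimal free resolution is therefore self-dual of length $c=\codim(C\subset\PPE)=k-2$, its differentials are linear except the first and last (given by the $\binom{k}{2}-k$ defining quadrics and, by symmetry, again by quadrics), and its Betti numbers are the stated $\beta_i=\frac{i(k-2-i)}{k-1}\binom{k}{i+1}$ in the interior together with $\beta_0=\beta_{k-2}=1$; these are read off from the Hilbert function or recognised as Eagon--Northcott numbers.

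Next I would globalise. Because the fibrewise Betti numbers are constant for general $\lambda$, the graded $\mathcal S$-module $M=\bigoplus_a\pi_*\OO_C(aH)$ admits a minimal free resolution over $\mathcal S$ whose $i$-th term is $N_i\otimes\mathcal S(-c_i)$ for a vector bundle $N_i$ on $\PP^1$ of rank $\beta_i$, where the internal degrees are $c_0=0$, $c_i=i+1$ for $1\le i\le k-3$, and $c_{k-2}=k$ (the jump by $2$ at each end reflecting that the extreme maps are quadratic). Sheafifying via relative $\Proj$ turns this into a complex $F_\bullet$ with $F_0=\OO_\PPE$ and $F_i=\pi^*N_i(-c_iH)$, and Grothendieck's splitting theorem writes $N_i=\bigoplus_j\OO_{\PP^1}(a^{(i)}_j)$, so that $\pi^*N_i=\bigoplus_j\OO_\PPE(a^{(i)}_jR)$ as asserted. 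Exactness of $F_\bullet$ as a resolution of $\OO_C$ I would obtain from the Peskine--Szpiro acyclicity criterion: the terms are flat over $\PP^1$, the complex is exact on the general fibre, and a depth/codimension estimate (using that $\PPE$ is Cohen--Macaulay and that the ideal has the expected codimension $k-2$) upgrades this to global exactness, covering the finitely many special fibres where the points may degenerate.

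For part (b) I would invoke relative duality. The curve $C$ is smooth, hence Cohen--Macaulay of codimension $c=k-2$ in $\PPE$, and the dualizing sheaf of the scroll is $\omega_\PPE=\OO_\PPE\bigl(-(k-1)H+(f-2)R\bigr)$. Grothendieck--Serre duality gives $\mathscr{E}\!xt^{\,c}_{\OO_\PPE}(\OO_C,\omega_\PPE)\cong\omega_C$, and since $C$ is canonically embedded, $\omega_C=\OO_C(H)$. Dualizing the minimal resolution $F_\bullet$ into $\omega_\PPE$ and reindexing therefore produces a minimal free resolution of $\omega_C$; twisting by $\OO_\PPE(-H)$ and using $\omega_\PPE(-H)=\OO_\PPE(-kH+(f-2)R)$, the complex $\cHom\bigl(F_\bullet,\OO_\PPE(-kH+(f-2)R)\bigr)$ becomes a minimal free resolution of $\OO_C$. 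By uniqueness of minimal resolutions it is isomorphic to $F_\bullet$, which is exactly the claimed self-duality.

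The main obstacle I anticipate is the globalisation step: one must ensure that the minimal fibrewise resolutions vary in a flat family with no unexpected jump in the internal degrees over a positive-dimensional locus, and that the assembled complex is genuinely a resolution rather than merely a complex. Pinning the extreme twist to $c_{k-2}=k$ (rather than $k-1$) is the delicate point, and self-duality is the cleanest lever for it: the isomorphism in (b) forces the symmetry $c_i+c_{k-2-i}=k$, giving $c_{k-2}=k-c_0=k$ directly. For this reason I would prove (a) and (b) in tandem, exploiting the Gorenstein structure of the general fibre to guarantee the symmetric shape of the internal degrees before verifying global exactness by the acyclicity lemma.
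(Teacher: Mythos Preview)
The paper does not prove this statement at all: Theorem~\ref{Sch4.4} is quoted verbatim from Schreyer's article \cite{Sch} (Corollary~4.4 there) and is used as a black box throughout. There is consequently no ``paper's own proof'' to compare your proposal against.

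That said, your sketch is a faithful outline of Schreyer's original argument: restrict to a ruling fibre, observe that the $k$ points of $D_\lambda$ in $\PP^{k-2}$ are arithmetically Gorenstein with $h$-vector $(1,k-2,1)$, read off the self-dual shape and the Betti numbers of their minimal free resolution, and then globalise over $\PP^1$ by constructing the bundles $N_i$ step by step, with self-duality coming from $\omega_C\cong\OO_C(H)$ and the adjunction/dualizing computation on $\PPE$. Two small caveats. First, the linearly-general-position claim for the fibre $D_\lambda$ is not merely ``base point free plus the fibre is spanned''; Schreyer derives it from geometric Riemann--Roch (any $k-1$ of the points failing to span would force a $g^1_{k-1}$-type contradiction), and you should cite that rather than leave it as an aside. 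Second, and more substantively, your globalisation paragraph is the genuinely soft spot you yourself flag: Schreyer does not appeal to Peskine--Szpiro acyclicity over the base but builds $F_\bullet$ inductively, constructing each $N_i$ as the kernel sheaf of the preceding map and checking it is locally free of the expected rank by a cohomology computation; the constancy of fibrewise Betti numbers is then a consequence rather than an input. Your approach would work, but as written it assumes what needs to be established (flatness of the fibrewise minimal resolution as a family), so if you intend a self-contained proof you should either follow Schreyer's inductive construction or make the base-change argument precise.
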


\hspace{-7mm}
According to \cite{DP}, the resolution $F_\bullet$ above is called the \emph{relative canonical resolution}.

\begin{remark}
 A generalization of Theorem \ref{Sch4.4} can be found in \cite{CE} for covers $\pi:X\to Y$ of degree $k$. 
 In \cite{CE}, the authors used the Tschirnhausen bundle $\cE_T$ defined by  
 \vspace{-1mm}
 $$
 0\to \OO_Y \to \pi_*(\OO_X) \to \cE_T^{\vee} \to 0 
 $$
 to construct relative resolutions. 
 Note that for covers of $\PP^1$, $\cE_T=\cE\otimes \OO_{\PP^1}(2)$ and therefore, 
 the degrees of the syzygy bundles $N_i$ in \cite{CE} differ slightly from the ones given in 
 Proposition \ref{bundleDegs}.
\end{remark}

\begin{definition}
We say that a bundle of the form $\sum_{j=1}^{\beta_i}\OO_\PPE(nH+a_jR)$ is \emph{balanced} if
$\max_{i,j} |a_j-a_i|\leq 1$. 
The relative canonical resolution is called balanced if 
all bundles occurring in the resolution are balanced.
\end{definition}

\begin{remark}\label{remarkBalancedScroll}
To determine the splitting type of the bundle $\cE$, one can use \cite[(2.5)]{Sch}.  
It follows that the $\PP^1$-bundle $\cE$ associated to the scroll is always balanced 
for a Petri-general curve $C$ with a $g^1_k$ if $\rho(g,k,1)\geq 0$. \\
If $C$ is a general $k$-gonal curve and the degree $k$ map to $\PP^1$ is determined by a unique $g^1_k$, 
then it follows by \cite{Bal} that $\cE$ is balanced as well.
\end{remark}

\begin{remark}
 If all $a^{(i)}_j\geq -1$, \vspace{-0.5mm}
 one can resolve the $\OO_\PPE$-modules occurring in the relative canonical resolution of $C$ 
 by Eagon-Northcott type complexes. 
 An iterated mapping cone gives a possibly non-minimal resolution of the curve $C\subset \PP^{g-1}$. 
 In \cite{Sch}, Schreyer used this method to classify all possible Betti tables of canonical curves 
 up to genus $8$. 
 An implementation of this construction can be found in the \emph{Macaulay2}-package \cite{BH}.
\end{remark}

We will give a lower bound on the integers $a_j^{(1)}$ appearing 
in the resolution $F_\bullet$. 

\begin{proposition}
\label{positiveTwists}
 Let $C$ be a general canonically embedded curve of genus $g$ and 
 let $k\geq 4$ be an integer such that $\rho(g,k,1)\geq 0$ and $g>k+1$. 
 Let further $\Lscr\in W^1_k(C)$ be a general point inducing a complete base point free $g^1_k$.
 Then with notation as in Theorem \ref{Sch4.4}, 
 all twists $a_j^{(1)}$ of the bundle $N_1$ are non-negative. 
\end{proposition}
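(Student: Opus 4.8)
The plan is to translate the statement about twists $a_j^{(1)}$ into a cohomological vanishing on $\PPE$, and then use the self-duality of $F_\bullet$ together with the known degree of the bundle $N_1$ to pin down the sign. First I would recall that $N_1$ is the bundle of quadrics: the twists $a_j^{(1)}$ are exactly the integers for which $H^0(\PPE, \cI_{C/\PPE}(2H + a R))$ jumps, or equivalently $-a_j^{(1)} = $ the $R$-degrees occurring in $\pi_*(\cI_{C/\PPE}(2H))$. So the claim ``$a_j^{(1)} \geq 0$'' is equivalent to saying every relative quadric through $C$ has non-positive $R$-twist, i.e. $\pi_*(\cI_{C/\PPE}(2H + (-1)R)) = \pi_*(\cI_{C/\PPE}(2H - R))$ has no global sections — more precisely that $N_1 = \bigoplus \OO_{\PP^1}(a^{(1)}_j)$ with all $a^{(1)}_j \geq 0$, which by the splitting of a vector bundle on $\PP^1$ is equivalent to $h^1(\PP^1, N_1(-1)) = \deg N_1(-1)\cdot(-1)$-type bookkeeping; cleanest is: $N_1$ has all twists $\geq 0$ iff $H^1(\PP^1, N_1^\vee) = 0$, and dually iff $H^0(\PP^1, N_1^\vee(-1)) = 0$.

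The key computational input I would assemble is the degree of $N_1$. From Theorem \ref{Sch4.4} one has $\beta_1 = \binom{k-2}{2}$ (the rank of the bundle of quadrics), and a Riemann–Roch / Euler-characteristic computation on $\PPE$ — comparing $\chi(\OO_C(2D))$ with the contribution of $\OO_\PPE$ and $\pi^*N_1(-2H)$ in the relative resolution — gives a closed formula for $\deg N_1$ in terms of $g$ and $k$ (this is presumably Proposition \ref{bundleDegs}, which I am allowed to cite). Since the ambient scroll bundle $\cE$ is balanced under the Brill–Noether hypothesis (Remark \ref{remarkBalancedScroll}), all the relevant twisted pushforwards can be read off from balanced data, and I would combine $\deg N_1$ with the self-duality from Theorem \ref{Sch4.4}(b) — which relates $N_1$ and $N_{k-2}$, and more usefully constrains the ``shape'' of the whole resolution — to argue that the generic (Brill–Noether general) splitting type of $N_1$ cannot have a negative twist: a negative twist $a^{(1)}_j \leq -1$ would mean $H^0(\PPE, \cI_{C/\PPE}(2H - R)) \neq 0$, i.e. $C$ lies on a relative quadric that drops rank on a hyperplane of rulings, and one shows this forces $C$ to be special in moduli, contradicting generality.

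Concretely, for the upper bound on the negative side I would look at the residual curve $C' = $ image of $C$ under $|\omega_C \otimes \Lscr^{-1}|$ in $\PP^{f-1}$ (this is the strategy the introduction advertises): a relative quadric with a negative $R$-twist pulls back to an \emph{honest} quadric in the ideal of $C'$, and a general $C'$ of the relevant degree and genus has no quadrics in its ideal precisely in the Brill–Noether range under consideration when $\rho(g,k,1)\geq 0$ and $g > k+1$ and $k \geq 4$ — one checks $h^0(\PP^{f-1}, \cI_{C'}(2))$ against $h^0(\cO_{C'}(2))$ via Riemann–Roch on $C'$ and the generic surjectivity of the multiplication map $\Sym^2 H^0(\omega_C\otimes\Lscr^{-1}) \to H^0(\omega_C^{2}\otimes\Lscr^{-2})$ (a Petri-type statement available for general $C$). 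That multiplication-map surjectivity is the real content, so I would derive it by degeneration or by invoking a known general-curve result; under the stated inequalities the numerics force surjectivity, hence no quadrics through $C'$, hence no negative twist in $N_1$.

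The main obstacle I expect is exactly this last surjectivity: pinning down precisely that for a \emph{general} pair $(C, \Lscr)$ with $\rho \geq 0$ the map $\Sym^2 H^0(\omega_C\otimes\Lscr^{-1}) \to H^0(\omega_C^{2}\otimes\Lscr^{-2})$ is surjective (equivalently $C'$ has no quadrics) requires a genuine Brill–Noether / Petri argument — a dimension count alone does not suffice because the target can be smaller than the source, so one must rule out ``accidental'' quadrics by a specialization to a curve where the computation is transparent (e.g. a chain of elliptic curves or a $g$-cuspidal rational curve), and then semicontinuity. Everything else — the dictionary between twists of $N_1$ and global sections of twisted ideal sheaves on $\PPE$, the use of self-duality, and the degree formula for $N_1$ — is bookkeeping built on the results already in Section \ref{relCanRes}.
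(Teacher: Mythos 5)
Your reduction goes wrong at the dictionary between twists and cohomology, and the error propagates into the main step. Since $N_1\cong\pi_*\bigl(\cI_{C/\PPE}(2H)\bigr)=\bigoplus_j\OO_{\PP^1}(a_j^{(1)})$, a section of $\cI_{C/\PPE}(2H-R)=\;$``relative quadric with an $R$-twist'' exists exactly when some $a_j^{(1)}\geq 1$; it is \emph{not} the signature of a negative twist. Negative twists are detected by $H^1(\PP^1,N_1(-1))=H^1(\PPE,\cI_{C/\PPE}(2H-R))\neq 0$, i.e.\ by the failure of surjectivity of the restriction map $H^0(\PPE,\OO_\PPE(2H-R))\to H^0(C,\OO_C(2H-R))$. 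Likewise, quadrics through the residual curve $C'$ correspond (this is Lemma \ref{correspondenceQuadricGenerator}) to sections of $\cI_{C/\PPE}(2H-2R)$, hence to summands with $a_j^{(1)}=2$ --- the top of the range, not the bottom. So your concrete step, ``a negative twist pulls back to a quadric through $C'$, and a general $C'$ lies on no quadric in this range,'' is doubly flawed: the implication is backwards, and the numerical claim is false --- by Lemma \ref{quadraticGenerators} (and this is the whole point of the Main Theorem) a general $C'$ \emph{does} lie on quadrics whenever $(k-\rho-\frac{7}{2})^2-2k+\frac{23}{4}>0$, which happens throughout much of the range $\rho\geq 0$, $g>k+1$. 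Even if $C'$ lay on no quadric, that would only exclude the twist $2$, not twists $\leq -1$. (Minor further slips: $\beta_1=\frac{k(k-3)}{2}$, not $\binom{k-2}{2}$; and ``all twists $\geq 0$ iff $H^1(\PP^1,N_1^\vee)=0$'' is also wrong, though your $H^0(\PP^1,N_1^\vee(-1))=0$ version is correct.)

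The missing idea is the correct reduction and the correct multiplication map. After identifying negative twists with $H^1(\PPE,\cI_{C/\PPE}(2H-R))\neq 0$, the long exact sequence reduces the proposition to surjectivity of $H^0(\OO_\PPE(2H-R))\to H^0(\OO_C(2H-R))$, and a commutative diagram reduces this further to surjectivity of the multiplication map $H^0(C,\omega_C)\otimes H^0(C,\omega_C\otimes\Lscr^{-1})\to H^0(C,\omega_C^{\otimes 2}\otimes\Lscr^{-1})$ --- note this mixes $\omega_C$ with the residual bundle, and is not the map $\Sym_2 H^0(\omega_C\otimes\Lscr^{-1})\to H^0((\omega_C\otimes\Lscr^{-1})^{\otimes 2})$ you propose (whose cokernel measures precisely the quadrics through $C'$, i.e.\ the twist-$2$ summands). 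That surjectivity does not need a degeneration argument: since $\Lscr$ is general and $g>k+1$, the residual system $|\omega_C\otimes\Lscr^{-1}|$ is birational, and then the Arbarello--Sernesi theorem gives that $\bigoplus_{q\geq 0}H^0(C,\omega_C\otimes(\omega_C\otimes\Lscr^{-1})^{q})$ is generated in degree $0$ over $\Sym(H^0(C,\omega_C\otimes\Lscr^{-1}))$, which contains the required surjectivity as the degree-one piece. The self-duality of $F_\bullet$ and the degree formula for $N_1$, which you lean on, are not needed for this proposition.
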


\begin{proof} 
As usual, we denote by $\PPE$ the $\PP^1$-bundle induced by the $g^1_k$. 
We consider the relative canonical resolution of $C\subset \PPE$.
Twisting of the relative canonical resolution by $2H$ 
and pushing forward to $\PP^1$,
we get an isomorphism 
$\pi_{*}(\mscr{I}_{C/\PPE}(2H))\cong N_1=\bigoplus_{j=1}^{\beta_1}\OO_{\PP^1}(a^{(1)}_j)$. 
Then, all twists $a_j^{(1)}$ are non-negative if and only if 
$$
 h^1(\PP^1,N_1(-1))=
 h^1(\PP^1,\pi_*(\mscr{I}_{C/\PPE}(2H-R)))=
 h^1(\PPE,\mscr{I}_{C/\PPE}(2H-R))=0.
$$
We consider the long exact cohomology sequence
\begin{align*}
 0 & \to H^0(\PPE,\mscr{I}_{C/\PPE}(2H-R)) \to H^0(\PPE, \OO_{\PPE}(2H-R)) \to 
 H^0(\PPE, \OO_C(2H-R))\to \\
   & \to H^1(\PPE, \mscr{I}_{C/\PPE}(2H-R)) \to \dots
\end{align*}
obtained from the standard short exact sequence.

The vanishing of $H^1(\PPE,\mscr{I}_{C/\PPE}(2H-R))$ is equivalent to
the surjectivity of the map  
$$
 H^0(\PPE,\OO_{\PPE}(2H-R))\longrightarrow H^0(C,\OO_{C}(2H-R)).
$$
From the commutative diagram 
\vspace{-4mm}
$$
\begin{xy}
\xymatrix{
 H^0(\PPE,\OO_{\PPE}(2H-R))\ar[r]  &  H^0(C,\OO_{C}(2H-R)) \\
 H^0(\PPE,\OO_{\PPE}(H))\otimes H^0(\PPE,\OO_{\PPE}(H-R)) \ar[u] \ar[r]^{\ \ \ \ \ \ \ \cong} &
 H^0(C,\OO_{C}(H)) \otimes H^0(C,\OO_{C}(H-R)) \ar[u]^{\eta}
}
\end{xy}
$$
we see that it suffices to show the surjectivity of $\eta$.

Note that the system $|H-R|$ on $C$ is $\omega_C\otimes \Lscr^{-1}$.
The residual line bundle $\omega_C\otimes \Lscr^{-1}\in W^{g-k}_{2g-2-k}(C)$ is general 
since $\Lscr$ is general.  
Hence, the residual morphism induced by $|\omega_C\otimes \Lscr^{-1}|$ is birational for $g-k\geq 2$ 
by \cite[Section 0.b (4)]{GH}. 

We may apply \cite[Theorem 1.6]{AS} and get a surjection
$$ 
\bigoplus_{q\geq 0}\Sym_q(H^0(C,\omega_C\otimes \Lscr^{-1}))\otimes H^0(C,\omega_C)\longrightarrow 
\bigoplus_{q\geq 0} H^0(C,\omega_C\otimes (\omega_C\otimes \Lscr^{-1})^q),
$$
i.e., the $\Sym(H^0(C,\omega_C\otimes \Lscr^{-1}))$-module 
$\bigoplus_{q\in \mathbb{Z}} H^0(C,\omega_C\otimes (\omega_C\otimes \Lscr^{-1})^q)$ is generated in 
degree $0$. In particular, this implies the surjectivity of $\eta$. 
\end{proof}

\begin{remark}
 Using the projective normality of $C\subset \PPE$, 
 one can show that all twists $a_j^{(1)}$ of $N_1$ are greater or equal to $-1$. 
 \vspace{0.5mm}
 There exist several examples where $N_1$ has negative twists (see \cite{Sch}). 
 We conjecture that all $a_j^{(i)}\geq -1$ and in general $a_j^{(i)}\geq 0$.
\end{remark}

It is known that the degrees of the bundles $N_i$ can be computed recursively. 
However, we did not find a closed formula for the degrees in the literature.

\begin{proposition}\label{bundleDegs}
The degree of the bundle $N_i$ of rank $ \beta_i=\frac{k}{i+1}(k-2-i)\binom{k-2}{i-1}$ 
in the relative canonical resolution $F_\bullet$ is 
\vspace{-2mm}
$$
\deg(N_i)=\sum_{j=1}^{\beta_i}a_j^{(i)}=(g-k-1)(k-2-i)\binom{k-2}{i-1}.
$$
For $i=1,2$ one obtains $\deg(N_1)=(k-3)(g-k-1)$ and $\deg(N_2)=(k-4)(k-2)(g-k-1)$.
\end{proposition}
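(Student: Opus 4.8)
The plan is to extract the single invariant $\deg N_i=\sum_j a_j^{(i)}$ from an Euler characteristic computation on $\PPE$, taking advantage of the fact that the Hilbert polynomial of $\PPE$ has a whole interval of integer roots. The first step is to record, for an arbitrary line bundle on $\PPE$, the formula
$$
Q(a,b):=\chi\bigl(\PPE,\OO_\PPE(aH+bR)\bigr)=\binom{a+k-2}{k-2}\left(\frac{af}{k-1}+b+1\right),
$$
where $f=g-k+1$ and $k-1=\rank\cE$. Indeed, for $a\geq 0$ one has $R^{>0}\pi_*\OO_\PPE(aH+bR)=0$ and $\pi_*\OO_\PPE(aH+bR)=\Sym^a\cE\otimes\OO_{\PP^1}(b)$, so Riemann--Roch on $\PP^1$, together with $\rank\Sym^a\cE=\binom{a+k-2}{k-2}$ and $\deg\Sym^a\cE=\tfrac{af}{k-1}\binom{a+k-2}{k-2}$, yields the displayed expression; since both sides are polynomials in $a$, the identity holds for all $a$. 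The decisive point is that $Q(a,b)=0$ whenever $a\in\{-1,-2,\dots,-(k-2)\}$, because the binomial factor vanishes there.

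The second step is to apply $\chi\bigl(\PPE,\,-\otimes\OO_\PPE(mH+nR)\bigr)$ to the relative canonical resolution $F_\bullet$ of Theorem \ref{Sch4.4}. On the one hand $\OO_C(mH+nR)\cong\omega_C^{\otimes m}\otimes\pi^*\OO_{\PP^1}(n)$ on $C$, since $H$ restricts to $\omega_C$ and $R$ to a divisor of the $g^1_k$; as $\pi$ has degree $k$ on $C$, Riemann--Roch gives $\chi\bigl(\PPE,\OO_C(mH+nR)\bigr)=m(2g-2)+nk+1-g$. On the other hand, writing $F_i=\pi^*N_i(-c_iH)$ with $c_0=0$, $c_i=i+1$ for $1\leq i\leq k-3$ and $c_{k-2}=k$, and $\pi^*N_i=\bigoplus_{j=1}^{\beta_i}\OO_\PPE(a_j^{(i)}R)$, the additivity of $\chi$ in exact sequences (applied to $F_\bullet\otimes\OO_\PPE(mH+nR)$, a resolution of $\OO_C(mH+nR)$) yields
$$
\sum_{i=0}^{k-2}(-1)^i\sum_{j=1}^{\beta_i}Q\bigl(m-c_i,\,n+a_j^{(i)}\bigr)=m(2g-2)+nk+1-g .
$$

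The third step is the specialization $m=i+1$ for a fixed $i$ with $1\leq i\leq k-3$. Then $m-c_\ell\in\{-1,\dots,-(k-2)\}$ for every $\ell$ with $i<\ell\leq k-2$, so all of those summands drop out by the root property of $Q$, and only $F_0,F_1,\dots,F_i$ survive. The contribution of $F_i$ is $(-1)^i\sum_j Q(0,\,n+a_j^{(i)})=(-1)^i\bigl(\beta_i(n+1)+\deg N_i\bigr)$, the only place $\deg N_i$ enters. Hence the identity expresses $\deg N_i$ in terms of $\deg N_0,\dots,\deg N_{i-1}$, of $\beta_0,\dots,\beta_{i-1}$ and of explicit binomials (here $\deg N_0=0$ and $\beta_0=1$ because $F_0=\OO_\PPE$); comparing the coefficients of $n$ on the two sides likewise yields the rank $\beta_i$. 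This is a recursion that determines all the interior $\deg N_i$, and carrying it out for $i=1$ and $i=2$ gives directly $\deg N_1=(k-3)(g-k-1)$ and $\deg N_2=(k-4)(k-2)(g-k-1)$.

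The last step is to close the induction. Substituting the candidate closed forms $\deg N_j=(g-k-1)(k-2-j)\binom{k-2}{j-1}$ and $\beta_j=\tfrac{k}{j+1}(k-2-j)\binom{k-2}{j-1}$ into the recursion, replacing $f$ by $g-k+1$, and collecting terms, one checks that the recursion is satisfied; after simplification this reduces to a binomial identity in $i$ and $k$, provable e.g. by generating functions or iterated Vandermonde convolutions. I expect this combinatorial verification to be the main --- indeed essentially the only --- obstacle, since everything preceding it is formal. (The two boundary bundles $N_0\cong\OO_{\PP^1}$ and $N_{k-2}\cong\OO_{\PP^1}(f-2)$ are line bundles of evident degree, so the closed formula is to be read for the interior indices $1\leq i\leq k-3$.)
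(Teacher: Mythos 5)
Your proposal is correct and is essentially the paper's own argument: both compute $\chi$ of the relative canonical resolution twisted so that all terms beyond the $i$-th drop out (your specialization $m=i+1$ is exactly the paper's twist by $(n+1)H$ with $n=i$), obtaining a recursion for $\deg N_i$ that is then closed by induction and binomial simplification. The only differences are cosmetic: your use of the polynomial $Q(a,b)$ and Snapper-type polynomiality lets you skip the paper's reduction (via \cite{CE}) to the case $a_j^{(i)}\geq -1$, and the combinatorial verification you defer to ``generating functions or Vandermonde convolutions'' is precisely what the paper carries out through its three displayed alternating-sum identities.
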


\begin{proof}
The degrees of the bundles $N_i$ can be computed by considering the identity
\vspace{-2mm}
\begin{align}\label{alternatingEuler}
\chi (\OO_C(\nu))=\sum_{i=0}^{k-2}(-1)^i \chi( F_i(\nu)).
\end{align} 

If $b\geq -1$, we have 
\vspace{-3mm}
\begin{align*}
h^i(\PPE,\OO_{\PPE}(aH+bR))=
\begin{cases}
 h^i(\PP^1,S_a(\cE)(b)), & \text{ for } a\geq 0 \\
 0, & \text{ for } -k<a<0 \\
 h^{k-i}(\PP^1,S_{-a-k}(\cE)(f-2-b)), & \text{ for } a \leq -k
\end{cases}
\vspace{-4mm}
\end{align*}
where $f=\deg(\cE)=g-k+1$.
As in the construction of the bundles in \cite[Proof of Step B, Theorem 2.1]{CE}, 
one obtains that the degree of $N_i$ is independent of the splitting type of the bundle.  
Hence, we assume that $a_j^{(i)}\geq -1$ and therefore, 
we can apply the above formula to all terms in $F_\bullet$. 

We compute the degree of $N_n$ by induction. The base case is straightforward.
We twist the relative canonical resolution by $\OO_{\PPE}(n+1)$ and 
compute the Euler characteristic of each term. 
By the Riemann-Roch Theorem, $\chi(\OO_C(n+1))=(2n+1)g-(2n+1)$. 
Applying the above formula yields
\begin{align*}
\chi( F_i(n+1))=
\begin{cases}
 \binom{k-1+n}{k-2}+f\binom{k-1+n}{k-1}, & \text{ for } i=0 \\
 (\deg(N_i)+\beta_i)\binom{k-2+n-i}{k-2}+ \beta_i f \binom{k-2+n-i}{k-1}, & \text{ for } n\geq i \geq 1\\
 0, & \text{ for } i\geq n+1 
\end{cases}
\end{align*}
Substituting all formulas in (\ref{alternatingEuler}), we get 
\begin{align*}
(2n+1)g-(2n+1)= 
   & \binom{k-1+n}{k-2}+f\binom{k-1+n}{k-1}  \\ 
   & +\sum_{i=1}^{n-1} (-1)^i \left((\deg(N_i)+\beta_i)\binom{k-2+n-i}{k-2} + 
                                   \beta_i f\binom{k-2+n-i}{k-1}\right)  \\
   & +(-1)^n (\deg(N_n)+\beta_n).           
\end{align*}
Using the induction step, the alternating sums simplify to 
\begin{align*}
&\sum_{i=1}^{n-1} (-1)^i \deg(N_i)\binom{k-2+n-i}{k-2}= 
 (f-2)(2n+1-nk)+(-1)^{n+1}(f-2)(k-2-n)\binom{k-2}{n-1} \\
&\sum_{i=1}^{n-1} (-1)^i \beta_i\binom{k-2+n-i}{k-2}=
 k - \binom{k-1+n}{k-2} +(-1)^{n+1}\frac{k}{n+1}(k-2-n)\binom{k-2}{n-1} \\
&\sum_{i=1}^{n-1} (-1)^i \beta_i f\binom{k-2+n-i}{k-1}=
 nkf-f\binom{k-1+n}{k-1}
\end{align*}
and we get the desired formula for $\deg(N_n)$. 
\end{proof}

%%%%%%%%%%%%%%%%%%%%%%

%%%%%%%%%%%%%%%%%%%%%%
\section{The Bundle of Quadrics}\label{bundleOfQuadrics}

Let $C\subset \PP^{g-1}$ be a general canonically embedded genus $g$ curve 
and let $k$ be a positive integer such that the Brill-Noether number 
$\rho:=\rho(g,k,1)$ is non-negative and $g>k+1$. 
Let $\Lscr\in W^1_k(C)$ general. 
Then, we denote by $X$ the rational normal scroll swept out by the $g^1_k=|\Lscr|$ 
and by $\PP(\mscr{E})\rightarrow X$ the projective bundle associated to $X$. 
By Remark \ref{remarkBalancedScroll}, the bundle $\mscr{E}$ on $\PP^1$ is of the form 
\vspace{-3mm}
$$\vspace{-1mm}
\mscr{E}=\bigoplus_{i=1}^{k-1-\rho} \OO_{\PP^1}(1)\oplus \bigoplus_{i=1}^{\rho} \OO_{\PP^1}.
$$
By Theorem \ref{Sch4.4}, the resolution of the ideal sheaf $\mscr{I}_{C/\PP(\mscr{E})}$ is of the form 
\vspace{-3mm}
$$ \vspace{-3mm}
 0\longleftarrow \mscr{I}_{C/\PPE} \longleftarrow 
 Q:=\sum_{j=1}^{\beta_1} \OO_{\PP(\mscr{E})}(-2H + a_j^{(1)} R) 
  \longleftarrow ...
$$
where $\beta_1=\frac{1}{2}k(k-3)$. We denote $Q$ the bundle of quadrics. 
By Proposition \ref{bundleDegs}, we know the degree of $N_1=\pi_*(Q)$ is precisely
\vspace{-3mm}
$$\vspace{-1mm}
\deg(N_1)= \sum_{j=1}^{\beta_1} a_j^{(1)} = (k-3)(g-k-1). 
$$
By Proposition \ref{positiveTwists}, all $a_i$ are non-negative. 
Since each summand of $Q$ corresponds to a non-zero global section of 
$\OO_\PPE(2H-a_j^{(1)}R)$, we get $2\cdot e_1-a_j^{(1)}\geq 0$. 
Hence $a_j^{(1)}\leq 2$ for all $j$. 
It follows that the bundle of quadrics $Q$ is of the following form
$$
Q= \OO_\PPE(-2H)^{\oplus l_0}\oplus \OO_\PPE(-2H+R)^{\oplus l_1}\oplus \OO_\PPE(-2H+2R)^{\oplus l_2}.
$$
We will describe the possible generators of $\mscr{I}_{C/\PP(\mscr{E})}$ in 
$H^0(\PPE,\OO_\PPE(2H-2R))$. 
Therefore, we consider the residual line bundle $\omega_C\otimes \Lscr^{-1}$ with 
$$
 h^0(C,\omega_C\otimes \Lscr^{-1})=f=g-k+1 
\text{ and } 
\deg(\omega_C\otimes \Lscr^{-1})=2g-k-2.
$$
%Hence, $|\omega_C\otimes \Lscr^{-1}|=g^{g-k}_{2g-k-2}$. 
By \cite[Section 0.b (4)]{GH}, $|\omega_C \otimes \Lscr^{-1}|$ induces a birational map for $g > k+1$. 

\begin{lemma}
\label{correspondenceQuadricGenerator}
 Let $C'\subset\PP^{g-k}$ be the birational image of $C$ under the residual linear system 
 $|\omega_C\otimes \Lscr^{-1}|$. 
 There is a one-to-one correspondence between quadratic generators of $C'\subset\PP^{g-k}$ and 
 quadratic generators of $C\subset \PPE$ contained in  $H^0(\PPE,\OO_\PPE(2H-2R))$.
\end{lemma}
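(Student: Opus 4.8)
The plan is to identify $H^0(\PPE,\OO_\PPE(2H-2R))$ canonically with the space of quadrics on the $\PP^{g-k}$ containing $C'$, in such a way that restriction of sections to $C\subset\PPE$ matches restriction of quadrics to $C'$; the bijection of the lemma then follows by comparing the kernels of these two restriction maps, i.e.\ the two spaces of quadrics vanishing on the curve.

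First I would fix the dictionary of line bundles on $C$: restriction gives $\OO_\PPE(H)|_C=\omega_C$ and $\OO_\PPE(R)|_C=\Lscr$, hence $\OO_\PPE(H-R)|_C=\omega_C\otimes\Lscr^{-1}=:M$ and $\OO_\PPE(2H-2R)|_C=M^{\otimes2}$, where $h^0(C,M)=f=g-k+1$, so $C'\subset\PP^{g-k}$. Next I would show that restriction induces an isomorphism $H^0(\PPE,\OO_\PPE(H-R))\xrightarrow{\ \sim\ }H^0(C,M)$. Twisting the relative canonical resolution of Theorem~\ref{Sch4.4} by $\OO_\PPE(H-R)$, every term $\pi^*N_i(\mathrm{twist})(H-R)$ has $H$-coefficient strictly between $-k$ and $0$; by the cohomology computation in the proof of Proposition~\ref{bundleDegs} (its middle case, which holds for an arbitrary twist by $bR$) all these terms are acyclic, so $H^{\bullet}(\PPE,\mscr{I}_{C/\PPE}(H-R))=0$, and the ideal sheaf sequence yields the isomorphism.

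The key step is to prove that the multiplication map
\[
 \mu\colon \Sym^2 H^0\!\big(\PPE,\OO_\PPE(H-R)\big)\longrightarrow H^0\!\big(\PPE,\OO_\PPE(2H-2R)\big)
\]
is an isomorphism. Pushing forward to $\PP^1$, $\mu$ is the map on global sections of the natural morphism $\Sym^2\!\big(\mscr{E}(-1)\big)\to S_2\mscr{E}\otimes\OO_{\PP^1}(-2)$, and the balanced splitting type $\mscr{E}\cong\OO_{\PP^1}(1)^{k-1-\rho}\oplus\OO_{\PP^1}^{\rho}$ from Remark~\ref{remarkBalancedScroll} gives $\mscr{E}(-1)\cong\OO_{\PP^1}^{f}\oplus\OO_{\PP^1}(-1)^{\rho}$ and $S_2\mscr{E}\otimes\OO_{\PP^1}(-2)\cong\OO_{\PP^1}^{\binom{f+1}{2}}\oplus\OO_{\PP^1}(-1)^{f\rho}\oplus\OO_{\PP^1}(-2)^{\binom{\rho+1}{2}}$, whose $\binom{f+1}{2}$ global sections are precisely $\Sym^2$ of the $f$ global sections of $\mscr{E}(-1)$. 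This is the only place where the balancedness of the scroll enters. Composing $\mu$ with the isomorphism of the previous paragraph produces a commutative square whose top edge is the isomorphism $\Sym^2 H^0(C,M)\xrightarrow{\ \sim\ }H^0(\PPE,\OO_\PPE(2H-2R))$, whose left edge is the multiplication $\Sym^2 H^0(C,M)\to H^0(C,M^{\otimes2})$ and whose right edge is restriction $H^0(\PPE,\OO_\PPE(2H-2R))\to H^0(C,\OO_C(2H-2R))=H^0(C,M^{\otimes2})$; it commutes because restriction is multiplicative. Passing to kernels identifies $H^0(\PPE,\mscr{I}_{C/\PPE}(2H-2R))$ with $\ker\!\big(\Sym^2 H^0(C,M)\to H^0(C,M^{\otimes2})\big)$. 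Since $C\to C'$ is birational (this is where $g>k+1$ is used, via \cite[Section 0.b (4)]{GH}), a quadric on $\PP^{g-k}$ contains $C'$ if and only if it pulls back to zero on $C$; hence this kernel is exactly the space $I(C')_2$ of quadratic generators of $C'$, the ideal $I(C')$ having no linear part because $C'$ is non-degenerate.

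Finally I would match $H^0(\PPE,\mscr{I}_{C/\PPE}(2H-2R))$ with the quadratic generators of $C\subset\PPE$ sitting in $H^0(\PPE,\OO_\PPE(2H-2R))$, i.e.\ with the $\OO_\PPE(-2H+2R)$-summands of the bundle of quadrics $Q$, of which there are $l_2$. Twisting the relative canonical resolution by $\OO_\PPE(2H-2R)$, every term of homological degree $\geq2$ again has $H$-coefficient in the acyclic range $(-k,0)$, whereas $F_1(2H-2R)=\bigoplus_j\OO_\PPE\!\big((a_j^{(1)}-2)R\big)$ has $h^0=l_2$: by Proposition~\ref{positiveTwists} the $a_j^{(1)}$ are non-negative, so only the summands with $a_j^{(1)}=2$ contribute. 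A standard spectral sequence argument then gives $\dim H^0(\PPE,\mscr{I}_{C/\PPE}(2H-2R))=l_2$, so the $l_2$ generators of $Q$ in this twist form a basis of $H^0(\PPE,\mscr{I}_{C/\PPE}(2H-2R))$, and their images under the isomorphism above are a basis of $I(C')_2$. Reading the two identifications together yields the one-to-one correspondence asserted by the lemma. The main difficulty is the isomorphism $\mu$; all the remaining steps are bookkeeping with the cohomology of the line bundles $\OO_\PPE(aH+bR)$.
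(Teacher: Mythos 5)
Your proof is correct, and it arrives at the paper's statement by a more algebraic route than the paper's own argument, which is a short geometric one: since $\rho\geq 0$ the scroll is a cone over the Segre variety $\PP^1\times\PP^{g-k}$, and the paper simply observes that a section of $\OO_\PPE(2H-2R)$, viewed as a section of $S_2(\mscr{E})\otimes\OO_{\PP^1}(-2)$, is independent of the fiber over $\PP^1$, hence is pulled back under the projection $p:\PPE\to\PP^{g-k}$ from a quadric containing $C'$, and conversely. Your key step, the isomorphism $\Sym^2 H^0(\PPE,\OO_\PPE(H-R))\to H^0(\PPE,\OO_\PPE(2H-2R))$ forced by the splitting $\mscr{E}\cong\OO_{\PP^1}(1)^{k-1-\rho}\oplus\OO_{\PP^1}^{\rho}$, is the algebraic incarnation of exactly the same fiber-independence observation, and balancedness of $\mscr{E}$ enters in the same way in both versions. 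What you add is bookkeeping that the paper leaves implicit: the verification, by twisting the relative canonical resolution by $H-R$, that restriction $H^0(\PPE,\OO_\PPE(H-R))\to H^0(C,\omega_C\otimes\Lscr^{-1})$ is an isomorphism (this is what makes $p|_C$ the residual map and identifies quadrics on $\PP^{g-k}$ with $\Sym^2 H^0(C,\omega_C\otimes\Lscr^{-1})$), and the count $h^0(\PPE,\mscr{I}_{C/\PPE}(2H-2R))=l_2$, which makes precise that every element of the ideal in this bidegree is a minimal generator, i.e.\ an $\OO_\PPE(-2H+2R)$-summand of $Q$, so the correspondence genuinely is between generators on both sides. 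Your parenthetical remark that the vanishing $h^i(\PPE,\OO_\PPE(aH+bR))=0$ for $-k<a<0$ holds for arbitrary $b$ (not just $b\geq -1$ as stated in Proposition \ref{bundleDegs}) is needed for these twists and is correct. The cost is length; the benefit is that the informal ``one-to-one correspondence'' is upgraded to an explicit isomorphism of vector spaces between $I(C')_2$ and $H^0(\PPE,\mscr{I}_{C/\PPE}(2H-2R))$, together with the identification $l_2=h^0(\mscr{I}_{C'}(2))$ that the proof of the Main Theorem actually uses.
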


\begin{proof}
 Since $\rho\geq 0$, the scroll $X$ is a cone over the Segre variety $\PP^1\times \PP^{g-k}$. 
 Let $p:\PPE\longrightarrow \PP^{g-k}$ be the projection on the second factor. 
 An element $q$ of $H^0(\PPE,\OO_\PPE(2H-2R))$ corresponds to a global section of 
 $H^0(\PP^1,S_2(\mscr{E})\otimes \OO_{\PP^1}(-2))$ which does not depend on the fiber over $\PP^1$. 
 Hence, the image of $V(q)$ under the projection yields a quadric containing $C'$.
 Conversely, the pullback under the projection $p$ of a quadratic generator of $C'\subset\PP^{g-k}$ 
 does not depend on the fiber and has therefore to be contained in $H^0(\PPE,\OO_\PPE(2H-2R))$. 
\end{proof}

We are now interested in a bound on $k$ and $\rho$ such that the curve $C'$ lies on a quadric. 

\begin{lemma}
\label{quadraticGenerators}
 For a general curve $C$ and a general line bundle $\Lscr\in W^1_k(C)$, 
 the curve $C'\subset\PP^{g-k}$ lies on a quadric if and only if 
 the pair $(k,\rho)$ satisfies the inequality 
 $$
 (k-\rho-\frac{7}{2})^2-2k+\frac{23}{4} >0.
 $$
\end{lemma}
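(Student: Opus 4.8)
The plan is to reinterpret ``$C'$ lies on a quadric'' as the failure of injectivity of a multiplication map, and then to make the resulting dimension count sharp by a maximal rank statement. Write $M:=\omega_C\otimes\Lscr^{-1}$, so that $h^0(C,M)=f=g-k+1$ and $\deg M=2g-2-k$; by \cite{GH} the map $C\to C'\subset\PP^{g-k}=\PP H^0(C,M)^{\vee}$ given by $|M|$ is birational onto its image since $g>k+1$. As $C'$ is non-degenerate, $H^0(\PP^{g-k},\OO(1))$ is identified with $H^0(C,M)$, and as the normalisation $\nu\colon C\to C'$ is dominant, pullback gives an injection $H^0(C',\OO_{C'}(2))\hookrightarrow H^0(C,M^{\otimes 2})$. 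Hence the degree-two part of the homogeneous ideal of $C'$ is exactly the kernel of the multiplication map
$$
\mu_2\colon \Sym^2 H^0(C,M)\longrightarrow H^0(C,M^{\otimes 2}),
$$
and $C'$ lies on a quadric if and only if $\mu_2$ is not injective.

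I would then compute both sides. The source has dimension $\binom{g-k+2}{2}$. For the target, $g>k+1$ forces $\deg M^{\otimes 2}=4g-4-2k>2g-2$, so $h^1(C,M^{\otimes 2})=0$ and Riemann--Roch gives $h^0(C,M^{\otimes 2})=3g-3-2k$. The crucial input is that $\mu_2$ has \emph{maximal rank} for the general pair $(C,\Lscr)$; here one passes to $M$, using that $\Lscr\mapsto\omega_C\otimes\Lscr^{-1}$ identifies $W^1_k(C)$ with $W^{g-k}_{2g-2-k}(C)$, so that the statement becomes the maximal rank property of the quadric multiplication map at a general point of a Brill--Noether locus on a general curve. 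Granting this, $\mu_2$ is injective when $\binom{g-k+2}{2}\leq 3g-3-2k$ (so $C'$ lies on no quadric), while $\mu_2$ can never be injective when $\binom{g-k+2}{2}>3g-3-2k$ (so $C'$ always lies on a quadric). Therefore $C'$ lies on a quadric if and only if $\binom{g-k+2}{2}>3g-3-2k$.

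It remains to rewrite this inequality. Since $\rho=\rho(g,k,1)=2k-g-2$, i.e.\ $g=2k-2-\rho$, setting $m:=g-k=k-2-\rho$ turns $\binom{g-k+2}{2}>3g-3-2k$ into $m^2-3m+8-2k>0$, and completing the square via $m-\tfrac32=k-\rho-\tfrac72$ yields $(k-\rho-\tfrac72)^2-2k+\tfrac{23}{4}>0$, the claimed condition.

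The one genuinely substantial step is the maximal rank of $\mu_2$: the dimension count and the algebra are bookkeeping, but injectivity (resp.\ surjectivity) of the quadric multiplication map for the \emph{general} pair $(C,\Lscr)$ is precisely where the generality hypotheses, and hence the condition $\rho\geq 0$, enter. I would either invoke the known maximal rank theorem for quadrics --- the degree-two case of the maximal rank problem for general linear series on general curves --- or argue directly by degenerating $C$ together with $M$ to a suitable nodal or reducible curve for which $H^0(M)$, $H^0(M^{\otimes 2})$ and $\mu_2$ become explicit, concluding by semicontinuity of $\rank \mu_2$; in the second approach one must also check that the specialised bundle stays general in $W^{g-k}_{2g-2-k}(C)$, equivalently that $\Lscr$ stays general in $W^1_k(C)$.
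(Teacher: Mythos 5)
Your proposal is correct and follows essentially the same route as the paper: both reduce the question to a maximal rank statement for the quadric multiplication/restriction map of the general residual bundle $\omega_C\otimes\Lscr^{-1}$ (the paper cites \cite{W} for exactly this), combined with the identical dimension count $\binom{g-k+2}{2}$ versus $3g-2k-3$ and the same algebraic rewriting in terms of $(k,\rho)$. The only cosmetic difference is that you phrase everything through the multiplication map $\mu_2$ on $C$ itself rather than through the restriction map to the image $C'$, which sidesteps the Hilbert polynomial of the (possibly singular) image curve.
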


\begin{proof}
By \cite{W}, the map 
$$
H^0(\PP^{g-k}, \OO_{\PP^{g-k}}(2))\rightarrow H^0(C',\OO_{C'}(2))
$$ 
has maximal rank for a general curve $C$ and a general line bundle $\omega_C\otimes \Lscr^{-1}$.
Using the long exact cohomology sequence to the short exact sequence 
$$
 0\rightarrow \mscr{I}_{C'}(2)\rightarrow \OO_{\PP^{g-k}}(2) \rightarrow \OO_{C'}(2)\rightarrow 0,
$$
we see that $C'$ lies on a quadric if and only if 
$$
h^0(\PP^{g-k},\OO_{\PP^{g-k}}(2)) - h^0(C',\OO_{C'}(2)) > 0.
$$
We compute the Hilbert polynomial of $C'$: 
$
h_{C'}(n)=(2g-k-2)n+1-g 
$ 
and get $h_{C'}(2)=3g-2k-3$. 
The dimension of the space of quadrics in $\PP^{g-k}$ is $\binom{g-k+2}{2}$. 
Hence,  
\begin{align}
 \label{inequality}
 h^0(\PP^{g-k},\OO_{\PP^{g-k}} (2)) - h^0(C',\OO_{C'}(2)) = \binom{g-k+2}{2}-3g+2k+3>0. 
\end{align}
Expressing $g$ in terms of $k$ and $\rho$, the inequality (\ref{inequality}) is equivalent to 
$$
 (k-\rho-\frac{7}{2})^2-2k+\frac{23}{4} >0.
  \vspace{-4mm}
$$
\end{proof}

\begin{proof}[Proof of the Main Theorem]
As mentioned above, the bundle $Q=\pi^* N_1$ is of the form 
$
Q= \OO_\PPE(-2H)^{\oplus l_0}\oplus \OO_\PPE(-2H+R)^{\oplus l_1}\oplus \OO_\PPE(-2H+2R)^{\oplus l_2}
$
(see also Proposition \ref{positiveTwists}).
By Lemma \ref{correspondenceQuadricGenerator}, 
the bundle of quadrics is balanced if no quadratic generator of $C'\subset \PP^{g-k}$ exists. 
So, we are done for pairs $(k,\rho)$ with $(k-\rho-\frac{7}{2})^2-2k+\frac{23}{4}\leq 0$.

It remains to show that the bundle of quadrics is unbalanced in the case $\rho>0$ 
for pairs $(k,\rho)$ satisfying the inequality in Lemma \ref{quadraticGenerators}.  

Let $k$ and $\rho$ be non-negative integers satisfying the above inequality and 
let $l_2=h^0(C',\mscr{I}_{C'}(2))=(k-\rho-\frac{7}{2})^2-2k+\frac{23}{4}$ be 
the positive dimension of quadratic generators of the ideal of $C'$. 
By Lemma \ref{correspondenceQuadricGenerator}, 
the bundle $Q$ is now unbalanced if a summand of the type $\OO_{\PP(\mscr{E})}(-2H)$ exists. 
Such a summand exists if and only if the following inequality holds
\vspace{-5mm}
\begin{align}
\label{inequality2}
 l_0=\beta_1-l_2-l_1= \beta_1 -l_2 - (\sum_{i=1}^{\beta_1} a_i - 2\cdot l_2) > 0,
 \vspace{-4mm}
\end{align} 
An easy calculation shows that the inequality (\ref{inequality2}) is equivalent to 
\vspace{-2mm}
$$
 l_0=\binom{\rho+1}{2}>0.
 \vspace{-4mm}
$$\vspace{-4mm}
\end{proof}

For pairs $(k,\rho)$ in the following marked region, the bundle $Q$ is unbalanced. 

 \begin{figure}[H]
 \centering
 \includegraphics[scale=0.2]{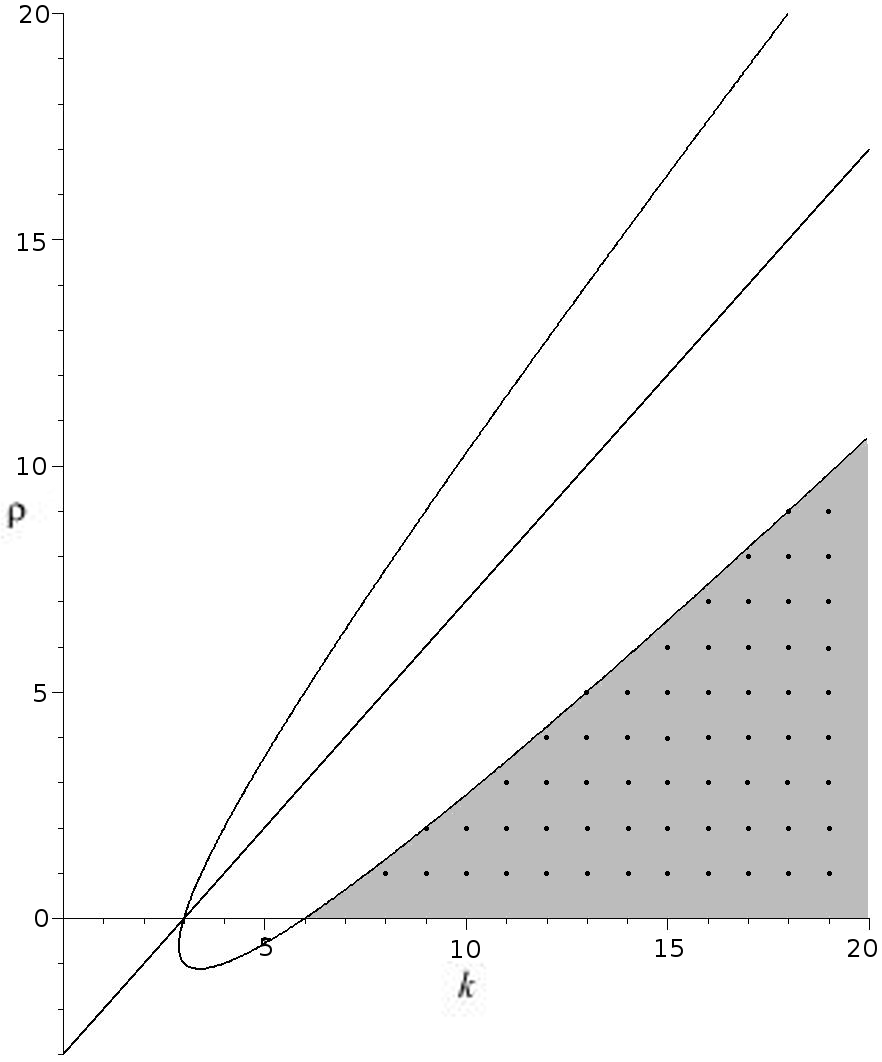}
 % pictureRegionUnbalancedBundles.png: 798x759 pixel, 72dpi, 28.15x26.78 cm, bb=0 0 798 759
 \caption{The conic: $(k-\rho-\frac{7}{2})^2-2k+\frac{23}{4}$=0 and 
 the line: $k-\rho-3=0\Leftrightarrow g=k+1$.}
\end{figure}
%%%%%%%%%%%%%%%%%%%%%%
\begin{remark}
 With our presented method, the whole first linear strand of the resolution of $C'\subset \PP^{g-k}$ lifts 
 to the resolution of $C\subset \PPE$. See also Example \ref{example_g19_k11}.
\end{remark}

%%%%%%%%%%%%%%%%%%%%%%
\section{Example and Open Problems}\label{examples}

\begin{example}
\label{example_g19_k11}
 Using \cite{BH}, we construct a nodal curve $C\subset \PP^{18}$ of genus $19$ 
 with a concrete realization of $\Lscr\in W^1_{11}(C)$. 
 The ideal of the scroll $X$ swept out by $|\Lscr|$ is given by the $2\times 2$ minors of the matrix 
 \vspace{-2mm}
 $$
 \begin{pmatrix}
  x_0 & x_2 & \dots & x_{16} \\
  x_1 & x_3 & \dots & x_{17}
 \end{pmatrix}.
 $$
 The resolution of the birational image $C'$ of $C$ under the map $|\omega_C\otimes \Lscr^{-1}|$ 
 has the following  Betti table
\begin{center}
\small
 \begin{tabular}{|c||c|c|c|c|c|c|c|c|}
 \hline
 &$0$&$1$&$2$&$3$&$4$&$5$&$6$&$7$ \\ \hline 
 $0$&$1$&-&-&-&-&-&-&- \\ \hline
 $1$&-&$13$&$9$&-&-&-&-&- \\ \hline
 $2$&-&-&$91$&$259$&$315$&$197$&$56$&$1$ \\ \hline
 $3$&-&-&-&-&-&-&-&$2$ \\ \hline
 \end{tabular}
\end{center}
 Assuming that the relative canonical resolution is as balanced as possible, 
 the first part of the relative canonical resolution is of the following form
$$
 0\leftarrow \mscr{I}_{C/\PPE}\leftarrow  
 \overset{\OO_\PPE(-2H+2R)^{\oplus 13}}{\underset{\OO_\PPE(-2H+R)^{\oplus 30}\oplus\OO_\PPE(-2H)}\oplus} \leftarrow
 \overset{\OO_\PPE(-3H+3R)^{\oplus 9}}{\underset{\OO_\PPE(-3H+2R)^{\oplus 192}\oplus\OO_\PPE(-3H+R)^{\oplus 30}}\oplus}  
 \leftarrow  \dots
$$
\end{example}
\hspace{-6mm}
Using the \emph{Macaulay2}-Package \cite{BH}, our experiments lead to conjecture the following:
\begin{conjecture}
\begin{enumerate}
\item[(a)]
 Let $C\subset \PP^{g-1}$ be a general canonical curve and let $k$ be a positive integer such that 
 $\rho:=\rho(g,k,1)\geq 0$. 
 Let $\Lscr\in W^1_k(C)$ be a general point inducing a $g^1_k=|\Lscr|$.  
 Then for bundles $N_i=\bigoplus \OO_{\PP^1}(a_j^{(i)})$, $i=2,\dots, \lceil\frac{k-3}{2} \rceil$ 
 there is the following sharp bound 
 \vspace{-2mm}
 $$
  \max_{j,l} | a_j^{(i)}-a_l^{(i)} | \leq \min\{g-k-1,i+1\} \vspace{-1mm}.
 $$ 
 In particular, if $g-k = 2$, the relative canonical resolution is balanced. 
 %Furthermore, if $g-k=3$ and $k\geq 6$, then the bundle $N_2$ is unbalanced. 
\item[(b)] 
 For general pairs $(C,g^1_k)$ with $\rho(g,k,1)\leq 0$, the bundle $N_1$ is balanced.
\end{enumerate}
\end{conjecture}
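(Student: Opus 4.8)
The plan is to reduce both parts to a combination of (i) a priori bounds on the twists $a_j^{(i)}$ coming from positivity and effectivity, and (ii) a sharpness statement obtained by lifting the linear strand of the residual curve $C'\subset\PP^{g-k}$, exactly as in the proof of the Main Theorem. First I would exploit the self-duality of Theorem \ref{Sch4.4}(b): since $\cHom(F_\bullet,\OO_\PPE(-kH+(g-k-1)R))\cong F_\bullet$, the bundle $N_{k-2-i}$ is the dual of $N_i$ twisted by $(g-k-1)$, so its splitting type is obtained from that of $N_i$ by the reflection $a\mapsto (g-k-1)-a$. In particular $N_{k-2-i}$ and $N_i$ have the same spread $\max_{j,l}|a_j^{(i)}-a_l^{(i)}|$, which both explains why the conjectured range only runs up to $i=\lceil\frac{k-3}{2}\rceil$ and shows that a lower bound for the twists of $N_i$ is equivalent to an upper bound for those of $N_{k-2-i}$. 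The case $i=1$ is the content of the Main Theorem, so it suffices to treat $2\le i\le\lceil\frac{k-3}{2}\rceil$.

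Next I would establish the a priori interval $0\le a_j^{(i)}\le\min\{i+1,\,g-k-1\}$. The upper bound $a_j^{(i)}\le i+1$ should follow from effectivity, generalizing the estimate $a_j^{(1)}\le 2e_1=2$ used in Section \ref{bundleOfQuadrics}: the summands of $\pi^*N_i$ occur inside the relative symmetric power $S_{i+1}(\cE)$, whose top twist is $(i+1)e_1=i+1$ because $\cE$ is balanced with $e_1=1$ for $\rho\ge 0$ (Remark \ref{remarkBalancedScroll}). The lower bound $a_j^{(i)}\ge 0$ is the generalization of Proposition \ref{positiveTwists} to higher syzygies, i.e.\ the positivity conjectured in the remark following it; I would attempt it by showing that the relevant $\Sym(H^0(\omega_C\otimes\Lscr^{-1}))$-module is generated in the expected degree, via an $N_p$-type statement or the vanishing of suitable Koszul cohomology groups, in the spirit of the degree-$0$ generation deduced from \cite{AS} for $N_1$. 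Feeding the lower bound into the self-duality reflection then yields $a_j^{(i)}\le g-k-1$, so that $a_j^{(i)}\in[0,\min\{i+1,g-k-1\}]$ and the spread is at most $\min\{i+1,g-k-1\}$. The special case $g-k=2$ is immediate: then $g-k-1=1$, the interval has length at most $1$, and the whole resolution is balanced.

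To obtain sharpness I would make precise the remark after the Main Theorem that the first linear strand of the minimal free resolution of $C'\subset\PP^{g-k}$ lifts to $C\subset\PPE$. Using the projection $p:\PPE\to\PP^{g-k}$ of Lemma \ref{correspondenceQuadricGenerator}, the fiber-independent $i$-th syzygies of $C$ are precisely the $i$-th linear syzygies of $C'$, and they contribute summands of $N_i$ at the extreme twist $a_j^{(i)}=i+1$; their number is the linear Betti number $\beta_{i,i+1}(C')$. I would compute these numbers from the Hilbert function of $C'$ together with a maximal-rank input for the general curve $C'$ (a Minimal-Resolution-Conjecture-type statement), and then combine them with the degree formula $\deg N_i=(g-k-1)(k-2-i)\binom{k-2}{i-1}$ of Proposition \ref{bundleDegs} and the a priori interval to solve for the multiplicities of each twist. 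This is what would show that the bound $\min\{i+1,g-k-1\}$ is attained, hence sharp, as already illustrated for $i=1,2$ in Example \ref{example_g19_k11}.

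For part (b) the curve lies on the gonality stratum, where $\cE$ is still balanced by Remark \ref{remarkBalancedScroll} and \cite{Bal}. When $\rho=0$ the statement is already the Main Theorem, so I would only need $\rho<0$; here $\PPE\cong X$ is smooth and I would rerun the analysis of $N_1$, showing by a maximal-rank statement for $\pi_*(\mscr{I}_{C/\PPE}(2H))$ that the twists $a_j^{(1)}$ cluster in an interval of length at most one, consistently with the unbalancing count $\binom{\rho+1}{2}$ from the proof of the Main Theorem being non-positive for $\rho\le 0$. The hard part will be twofold: the positivity input $a_j^{(i)}\ge 0$ for $i\ge 2$ is itself the open conjecture stated after Proposition \ref{positiveTwists}, and---more seriously---the sharpness step requires control of the entire linear strand of the minimal free resolution of the general curve $C'\subset\PP^{g-k}$, a general-curve syzygy problem of Minimal-Resolution type that lies beyond the quadric computation of Lemma \ref{quadraticGenerators} and is the essential reason the statement remains conjectural.
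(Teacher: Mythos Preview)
The statement you are addressing is a \emph{Conjecture} in the paper, not a theorem: the paper offers no proof whatsoever, only the computational evidence recorded in the Remark immediately following it (verification of part (b) for $(k,\rho)\in\{6,\dots,9\}\times\{-8,\dots,0\}$, and a list of suspected unbalanced higher $N_i$). There is therefore nothing to compare your argument against; what you have written is a proof \emph{strategy}, and you yourself correctly flag at the end that two of its ingredients are open.

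Your outline is coherent and is exactly the heuristic the paper suggests. The self-duality reduction $N_{k-2-i}\cong N_i^\vee(g-k-1)$ is correct and does convert lower bounds for $N_{k-2-i}$ into upper bounds for $N_i$. The inductive upper bound $a_j^{(i)}\le i+1$ can indeed be proved (each summand of $F_i$ maps nontrivially into $F_{i-1}$ by minimality, giving $a_j^{(i)}\le e_1+\max_l a_l^{(i-1)}$), though your phrasing ``the summands of $\pi^*N_i$ occur inside $S_{i+1}(\cE)$'' is a little loose. The sharpness mechanism via pulling back the linear strand of $C'\subset\PP^{g-k}$ along $p$ is precisely what the paper's remark after the Main Theorem and Example~\ref{example_g19_k11} illustrate.

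The genuine gaps are the two you name, and they are not technicalities: the positivity $a_j^{(i)}\ge 0$ for $i\ge 2$ is the paper's own open conjecture (stated after Proposition~\ref{positiveTwists}), and the sharpness step requires knowing the linear Betti numbers $\beta_{i,i+1}(C')$ of a general curve in $\PP^{g-k}$, a Minimal Resolution Conjecture--type input that is not available. Without these, the interval $[0,\min\{i+1,g-k-1\}]$ is not established and its endpoints are not shown to be attained, so neither the bound nor its sharpness follows. For part~(b) your sketch is too vague to constitute an argument: the sentence about the count $\binom{\rho+1}{2}$ being non-positive for $\rho\le 0$ is suggestive but does not by itself give balancedness, and the paper treats this case purely experimentally.
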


\begin{remark}
\begin{enumerate}
 \item [(a)] In order to verify Conjecture (b), it is enough to show the existence of one curve 
 with these properties. With the help of \cite{BH}, 
 we construct a $g$-nodal curve on a normalized scroll swept out by a $g^1_k$ and 
 compute the relative canonical resolution. Then, Conjecture (b) is true for 
 \vspace{-2mm}
 $$
 (k,\rho)\in \{6,7,8,9\}\times\{-8,-7,\dots,-1,0\} \text{ where } g=2k-\rho-2.
 \vspace{-2mm}
 $$
 \item [(b)] We found several examples (e.g. $(g,k)=(17,7),(19,8),\dots$) 
 of $g$-nodal $k$-gonal curves 
  where some of the higher syzygy modules $N_i,\ i\geq 2$ are unbalanced. 
 We believe that the generic relative canonical resolution is unbalanced in these cases.
\end{enumerate}
\end{remark}
%%%%%%%%%%%%%%%%%%%%%%

%%%%%%%%%%%%%%%%%%%%%%%%%%
\section*{Acknowledgement}

We would like to thank Anand Patel for bringing the topic back to our attention 
and for sending a draft of his joint work with Gabriel Bujokas.  
We would further like to thank Frank-Olaf Schreyer who provided the idea of the proof of 
Proposition \ref{positiveTwists}. 
The second author was supported by the DFG-grant SPP 1489 Schr. 307/5-2. 
%%%%%%%%%%%%%%%%%%%%%%%%%%

\bibliography{papers}{}
\bibliographystyle{alpha}

\end{document}